\documentclass[12pt,reqno]{article}

\title{An isoperimetric inequality for planar triangulations}
\author{Omer Angel \and Itai Benjamini \and Nizan Horesh}


\usepackage{amsmath,amssymb,amsfonts,amsthm}

\usepackage[colorlinks=true,linkcolor=black,citecolor=black,urlcolor=blue,pdfborder={0 0 0}]{hyperref}

\usepackage{graphicx}
\usepackage{enumitem}
\usepackage{color}

\usepackage[font=sf, labelfont={sf,bf}, margin=1cm]{caption}

\usepackage[nameinlink]{cleveref}
\crefname{thm}{Theorem}{Theorems}

\newtheorem{thm}{Theorem}
\newtheorem{prop}[thm]{Proposition}
\newtheorem{coro}[thm]{Corollary}
\newtheorem{problem}{Problem}
\numberwithin{equation}{section}

\newcommand{\R}{\mathbb R}
\newcommand{\bd}{\partial}
\newcommand{\eps}{\varepsilon}


\begin{document}

\maketitle

\begin{abstract}
  We prove a discrete analogue to a classical isoperimetric theorem of Weil for
  surfaces with non-positive curvature. It is shown that hexagons in the
  triangular lattice have maximal volume among all sets of a given boundary
  in any triangulation with minimal degree $6$.
\end{abstract}

\section{Introduction}

In 1926 Weil \cite{W, Z} proved the following:

\begin{thm}[Weil]
  If $M$ is a $2$-dimensional manifold homeomorphic to the unit disc with
  non-positive curvature at every point, then
  \[
    |\bd M| \geq 2\sqrt{\pi |M|} .
  \]
\end{thm}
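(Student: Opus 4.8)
The plan is to sweep the domain inward from its boundary by the level sets of the distance function and track how area and boundary length co-evolve, following the method of Fiala and Bol. Write $\rho(x)=\operatorname{dist}(x,\bd M)$ for the distance to the boundary, and for $t\ge 0$ let $M_t=\{x\in M:\rho(x)>t\}$ be the inner parallel domain, with $a(t)=|M_t|$ and $L(t)=|\bd M_t|$. Thus $a(0)=|M|$ and $L(0)=|\bd M|$, and the domain is exhausted as $t$ increases to $T=\max\rho$. The coarea formula gives the first relation, $a'(t)=-L(t)$. For the second I would apply the Gauss--Bonnet theorem to $M_t$: since $M_t$ is a disjoint union of topological discs (no handle can form inside a disc), its Euler characteristic is at least $1$, so
\[
  \int_{\bd M_t}\kappa_g\,ds \;=\; 2\pi\,\chi(M_t)-\int_{M_t}K\,dA \;\ge\; 2\pi ,
\]
using $K\le 0$ together with $\chi(M_t)\ge 1$. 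The first variation of arc length for the equidistant curves then gives $L'(t)=-\int_{\bd M_t}\kappa_g\,ds\le -2\pi$.

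With these two facts the conclusion is immediate. Set $g(t)=L(t)^2-4\pi\,a(t)$. Then
\[
  g'(t)=2L(t)L'(t)-4\pi a'(t)=2L(t)\bigl(L'(t)+2\pi\bigr)\le 0 ,
\]
since $L(t)\ge 0$ and $L'(t)\le -2\pi$, so $g$ is non-increasing on $[0,T]$. As $t\uparrow T$ the sets $M_t$ decrease to the ridge (the cut locus of $\bd M$), which has measure zero, so $a(t)\to 0$ and hence $\lim_{t\uparrow T}g(t)=\lim_{t\uparrow T}L(t)^2\ge 0$. Monotonicity then forces $g(0)\ge 0$, which is exactly $|\bd M|^2\ge 4\pi|M|$, i.e. $|\bd M|\ge 2\sqrt{\pi|M|}$. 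I would expect equality to force $K\equiv 0$ with every $M_t$ a single round disc, recovering the Euclidean extremal case.

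The main obstacle is regularity: $\rho$ is only Lipschitz, and its level sets $\bd M_t$ fail to be smooth along the cut/ridge locus, where the naive first-variation computation and the pointwise application of Gauss--Bonnet break down. I would handle this by recalling that for a compact surface with boundary the cut locus is a rectifiable set of measure zero and the set of critical values of $\rho$ is negligible, so the two displayed relations hold for almost every $t$ in the sense of the associated (distributional) derivatives; at the discrete times where components of $M_t$ pinch off or disappear, $L(t)$ can only jump \emph{downward}, which is consistent with $g$ being non-increasing and so does no harm. A conceptually different route that sidesteps the cut locus is the Beckenbach--Rad\'o argument: pass to isothermal coordinates, in which the hypothesis $K\le 0$ becomes subharmonicity of the logarithm of the conformal factor, and invoke the isoperimetric inequality for logarithmically subharmonic densities; the price there is that this weighted inequality is itself the crux and must be established separately.
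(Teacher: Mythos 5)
A preliminary point of comparison: the paper does not prove this theorem at all. It is quoted as the classical motivation, with references to Weil's paper and Izmestiev's proof, both of which (as the paper itself remarks) go through conformal geometry: uniformize the disc, observe that $K\le 0$ makes the logarithm of the conformal factor subharmonic, and prove an isoperimetric inequality for such weights --- exactly the Beckenbach--Rad\'o route you mention at the end as an alternative. Your proposal, the Fiala--Bol sweep by inner parallel domains, is therefore a genuinely different route from anything the paper cites; in fact it is the continuum analogue of the paper's own combinatorial argument for its discrete theorem: stripping the faces along the boundary is the discrete inward parallel flow, the key inequality $n'\le n-6$ there is the discrete counterpart of $L'(t)\le-2\pi$ here, and summing $(n-6k)^+$ is your integration of $g'\le 0$. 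So the strategy is classical, sound, and well matched in spirit to the paper.

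There is, however, a concrete gap in your two displayed relations, and it is not cured by your ``almost every $t$ plus downward jumps'' remark. Whenever $\bd M_t$ has corners, \emph{both} relations fail, and corners persist on whole intervals of $t$, not on a null set: for the flat square of side $s$ (or a smooth approximation of it, for $t$ bounded away from $0$), $M_t$ is a concentric square, so $\int_{\bd M_t}\kappa_g\,ds=0<2\pi$ for every $t$, while $L(t)=4(s-2t)$ gives $L'(t)=-8\neq-\int_{\bd M_t}\kappa_g\,ds$. The corners sit on the cut locus and carry the missing curvature as turning angles; the correct almost-everywhere statements are Gauss--Bonnet \emph{with corners}, $\int_{\bd M_t}\kappa_g\,ds+\sum_i\theta_i=2\pi\chi(M_t)-\int_{M_t}K\,dA\ge 2\pi$, together with the first variation \emph{with corner terms}, $L'(t)\le-\int_{\bd M_t}\kappa_g\,ds-\sum_i 2\tan(\theta_i/2)$, since a corner of turning angle $\theta_i\ge 0$ eats length at rate $2\tan(\theta_i/2)$. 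Because $2\tan(\theta/2)\ge\theta$ on $[0,\pi)$, the two combine to give precisely what you need, $L'(t)\le-2\pi$ (in the square, $-8\le-2\pi$), and your monotonicity of $g(t)=L(t)^2-4\pi a(t)$ then goes through. Establishing these corner-corrected relations rigorously --- the structure of the cut locus, a.e.\ differentiability of $L$, and the coarea identity $a'=-L$ when the level set $\{\rho=t\}$ may exceed $\bd M_t$ by cut-locus pieces --- is the actual content of Fiala's and Bol's papers (see also Hartman), which your sketch defers to but, as stated, misquotes. A smaller quibble of the same kind: to get $\chi(M_t)\ge 1$ you must rule out annular components, not handles; this follows because $\{\rho\le t\}$ is connected (every point reaches $\bd M$ along a minimizing geodesic staying in that set), so each complementary component is simply connected.
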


Thus a disc in the Euclidean plane solves the isoperimetric problem, not
just among sets in the plane but also among all surfaces with non-positive
curvature.  We give a discretized version of this:

\begin{thm}\label{thm:isoV}
  Any disc triangulation with $V$ vertices and $n$ boundary vertices, and
  with all internal degrees at least $6$ has
  $V\leq \left \lfloor \frac{(n+3)^2}{12} \right\rfloor$.  Equality is
  achieved in the Euclidean triangular lattice.
\end{thm}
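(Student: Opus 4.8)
The plan is to convert the degree hypothesis into a combinatorial Gauss--Bonnet identity and then run an ``onion peeling'' argument, controlling how fast the boundary shrinks as one moves inward.

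First I would record the basic counts. Writing $E$ for the number of edges and $T$ for the number of triangles, Euler's formula $V-E+T=1$ together with the edge--triangle incidence $3T=2E-n$ gives $T=2V-n-2$ and $E=3V-n-3$. Summing degrees and separating interior from boundary vertices then yields the identity
\[
  \sum_{v\ \mathrm{int}}(6-d(v)) + \sum_{v\in\bd}(4-d(v)) = 6 ,
\]
valid for every disc triangulation. Since all interior degrees are at least $6$, the first sum is $\le 0$, so $\sum_{v\in\bd}(4-d(v))\ge 6$, that is $\sum_{v\in\bd}d(v)\le 4n-6$. The crucial point is that this identity applies not only to the whole triangulation but to every sub-disc obtained by peeling, because a vertex interior to a sub-disc keeps all of its neighbours and hence its degree $\ge 6$.

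Next I would layer the triangulation by distance to the boundary: $L_k=\{v:\mathrm{dist}(v,\bd)=k\}$, with $D_k$ the sub-triangulation spanned by $\bigcup_{j\ge k}L_j$ and boundary cycle $\gamma_k$ of length $n_k$ (so $n_0=n$). The heart of the argument is the decrement lemma $n_{k+1}\le n_k-6$, which I would prove by double counting the ``spoke'' edges between $\gamma_k$ and $\gamma_{k+1}$. On one hand, the boundary degree bound above gives at most $\sum_{v\in\gamma_k}(d(v)-2)\le 2n_k-6$ such edges. On the other hand, every triangle in the annulus between the two cycles carries exactly two spokes and every spoke lies in exactly two such triangles, while a second Euler count shows the annulus contains precisely $n_k+n_{k+1}$ triangles; hence the number of spokes equals $n_k+n_{k+1}$. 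Comparing the two counts gives $n_k+n_{k+1}\le 2n_k-6$.

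Finally, summing $V=\sum_k n_k$ with the resulting estimate $n_k\le n-6k$ and optimizing over the number of layers gives $V\le\max_K (K+1)(n-3K)=(n+3)^2/12$, attained at $K=(n-3)/6$, which is exactly the configuration of nested hexagonal rings of the triangular lattice; integrality then yields the floor. I expect the main obstacle to be that peeling a layer need not return a clean disc: the remaining region can disconnect, develop pinch points where $\gamma_{k+1}$ fails to be a simple cycle, or the cycle $\gamma_k$ can carry chords, and near the centre the region may collapse to a single vertex or edge rather than a polygon. The bulk of the work will be a careful case analysis showing that each such degeneracy only \emph{accelerates} the shrinking---the missing curvature is concentrated at pinch points and chords---so that the decrement lemma, and hence the final bound, survive, with the behaviour of the innermost collapsed layer accounting for the exact value of the floor.
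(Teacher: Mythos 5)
Your outline coincides with the paper's proof: your Gauss--Bonnet identity $\sum_{v\,\mathrm{int}}(6-d(v))+\sum_{v\in\bd}(4-d(v))=6$ is equivalent to the paper's bound $m\le 2n-6$ on edge-ends leaving the boundary, your peeling scheme with the decrement lemma $n_{k+1}\le n_k-6$ is exactly the paper's stripping argument, and the final summation $\sum_{k\ge 0}(n-6k)^+=\lfloor (n+3)^2/12\rfloor$ is identical. The problem is that the decrement lemma is where the entire content of the theorem lives, and your proof of it is valid only for a ``clean'' annulus, in which every face has an edge on one of the two cycles and its third vertex on the other. In general the annulus also contains faces with all three vertices on $\gamma_k$ (with $0$, $1$ or $2$ edges on $\gamma_k$; the paper's $\beta_0,\beta_1,\beta_2$) and faces with two nonadjacent vertices on $\gamma_k$ and one interior vertex (the paper's $\gamma$); for these, both of your counting claims --- ``every triangle in the annulus carries exactly two spokes'' and ``the annulus contains precisely $n_k+n_{k+1}$ triangles'' --- fail.

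More importantly, the heuristic you propose for the deferred case analysis, that every degeneracy only \emph{accelerates} the shrinking, is false, so the plan as stated would not go through. Carrying out the corrected double count (as the paper does) yields
\[
  n' + 2\gamma + 3\beta_0 + \beta_1 - \beta_2 - 3\beta_3 \le n-6 ,
\]
in which faces of type $\beta_2$ (an ear with two edges on $\gamma_k$ and one chord) enter with the \emph{wrong sign}: such a face consumes two boundary edges while producing no spokes, so it slows the apparent shrinking rather than concentrating curvature in your favor. Closing this gap requires an idea absent from your proposal: pair each $\beta_2$ face with the face on the other side of its chord, which must be of type $\gamma$, $\beta_0$ or $\beta_1$ (the remaining possibilities force the whole triangulation to have $4$ or $5$ vertices, which are dispatched as base cases), and verify the multiplicities to conclude $\beta_2 \le \beta_1+3\beta_0+2\gamma$; only then does $n'\le n-6$ follow. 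A secondary point: the paper handles disconnection and pinching not by reproving the lemma for non-simple boundaries, but by splitting the stripped object into disc triangulations with total boundary size $n'$ and applying induction to each piece; your argument needs the same device, since both your boundary-degree bound $\sum_{v\in\gamma_k}(d(v)-2)\le 2n_k-6$ and the Gauss--Bonnet identity itself presuppose that each boundary vertex lies on exactly two boundary edges.
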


We remark that unlike the proofs in \cite{W, Z}, our arguments are purely
combinatorial and do not use conformal geometry.  We discuss below several
consequences and variants of the argument, including for hyperbolic
triangulations.

\section{Proof}


\begin{proof}[Proof of Theorem 1]
  If $V\le 6$ then there can be no internal vertices, and the claim clearly
  holds.  We assume henceforth that $V\ge 6$.  Let $E,F$ be the number of
  edges and faces in the triangulation.  By Euler's formula, $V-E+F=1$
  (since the external face is not counted).  Let $n=|\bd T|$.  Counting
  faces with a marked edge gives $3F=2E-n$, and thus $E = 3V-n-3$.

  Let $\sigma\ge 6$ be the average degree of internal vertices, and let $M$
  be the set of internal edges, incident to the boundary, with a marked
  endpoint on the boundary, and $m=|M|$ its cardinality (so that an
  internal edge between two boundary vertices is counted twice).  Summing
  vertex degrees gives
  \[
    2E = \sigma (V-n) + 2n + m,
  \]
  which gives
  \[
    m = 2n-6-(\sigma-6)(V-n),
  \]
  and since $\sigma\ge 6$ and $V-n$ is the number of internal vertices,
  $m\le 2n-6$.

  Stripping all boundary vertices and all faces incident to the boundary
  leaves a smaller triangulation, which need not be a disc triangulation:
  It may have several connected components, and may have components with a
  non-simple boundary, containing a cut vertex, or even bridges (See
  \cref{fig:strip}).  Let $n'$ be the number of boundary edges of the
  stripped triangulation (with bridges counted twice).  Our objective is to
  show that $n'\leq n-6$.

  \begin{figure}[h]
    \centering
    \includegraphics[width=.9\textwidth]{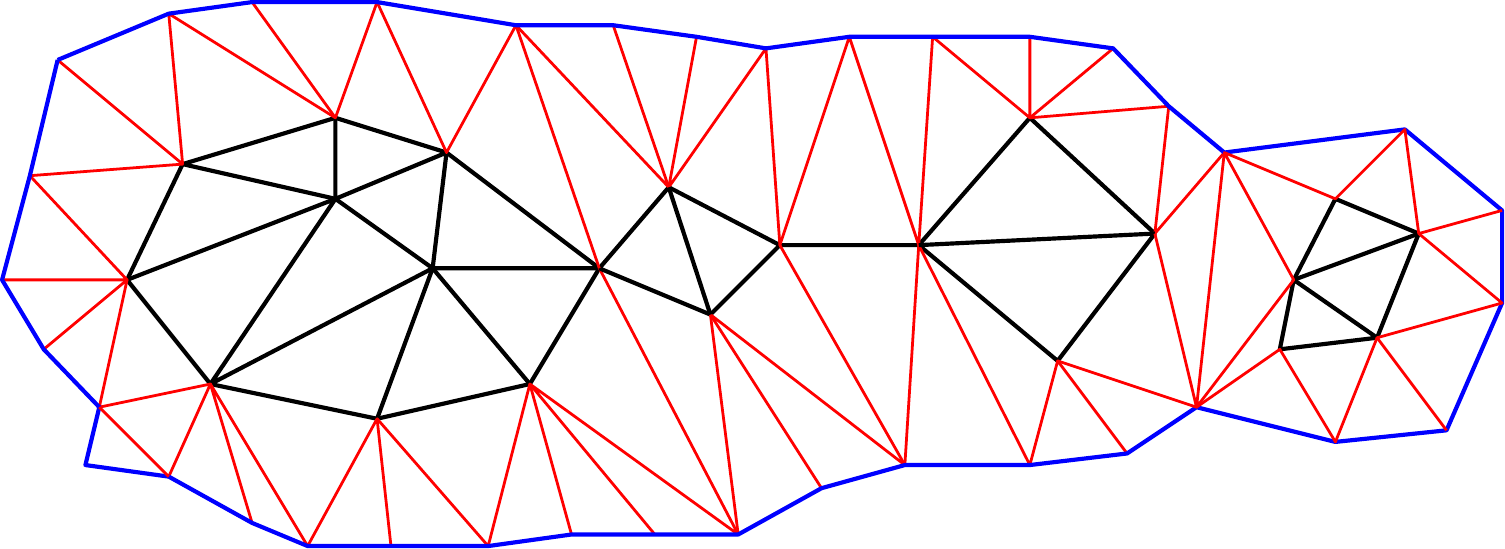}
    \caption{A disc triangulation with a boundary of length $37$.
      Stripping faces adjacent to the boundary leaves a smaller
      triangulation, with total boundary length $23\leq 37-6$.}
    \label{fig:strip}
  \end{figure}

  We now count edges of $M$ with a marked side (a face incident to them).
  The total is $2m$, and we count it by considering the different types of
  stripped faces.   Each edge of the new boundary
  (of which there are $n'$) must have a third vertex on the outer boundary,
  and contains two edges of $M$.  All other stripped faces have two or
  three vertices on the outer boundary.  Let $\alpha$ be the number of
  faces with a boundary edge and internal vertex.  Let $\gamma$ be the
  number of faces with two boundary vertices and one internal vertex, but
  no boundary edges.  Let $\beta_i$ be the number of faces with three
  boundary vertices and $i$ boundary edges (see \cref{fig:face_types}).
  Then we have the following identity:
  \[
    2m = 2n' + 2\alpha + 4\gamma + 6\beta_0 + 4\beta_1 + 2\beta_2,
  \]
  as well as
  \[
    n = \alpha + \beta_1 + 2\beta_2 + 3\beta_3.
  \]

  \begin{figure}
    \centering
    \includegraphics[width=.9\textwidth]{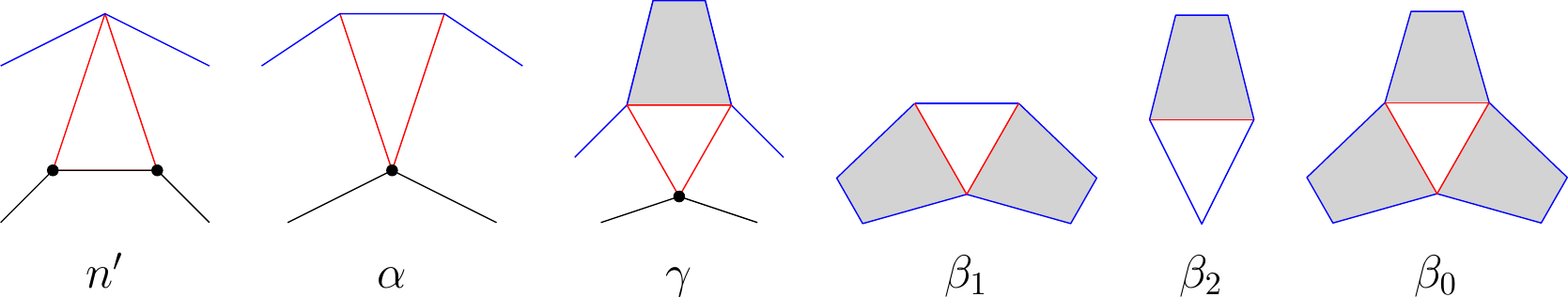}
    \caption{Different types of faces being stripped, and the number of
      such faces.  Internal vertices are marked.  The shade areas may
      contain any number of faces and vertices.}
    \label{fig:face_types}
  \end{figure}

  Combining these identities with $m\leq 2n-6$ gives
  \[
    n' + 2\gamma + 3\beta_0 + \beta_1 - \beta_2 - 3\beta_3 \leq n-6.
  \]
  Observe that if $\beta_3$ is non-zero, the entire triangulation consists
  of a single face, a case we already dealt with.  Otherwise, we need to
  show $\beta_2 \leq \beta_1 + 3\beta_0 + 2\gamma$.

  A face contributing to $\beta_2$ has a unique internal edge with both
  endpoints on the boundary.  The face $f$ on the other side of this edge will
  contribute to $\beta_1,\beta_3$, or $\gamma$.

  If $f$ has an internal third vertex, it counts towards $\gamma$.
  Alternatively, $f$ may have its third vertex on the boundary.  If $f$ is
  of type $\beta_2$, then the entire triangulation has $4$ vertices, all on
  the boundary.  If this face is of type $\beta_0$ or $\beta_1$ then it
  also contributes to $\beta_1 + 3\beta_0 + 2\gamma$.  A face of type
  $\beta_0$ can correspond to at most $3$ faces of type $\beta_2$.  It is
  possible for a face of type $\beta_1$ to correspond to two faces of type
  $\beta_2$, but this only happens if the entire triangulation has $5$
  vertices, a case we already considered.  Thus $n'\leq n-6$ as claimed.

  Separating the stripped triangulation into a collection of disc
  triangulations with total boundary size $n'$, and proceeding by
  induction, we find that at distance $k$ from the boundary there are at
  most $n-6k$ vertices. and summing this gives the volume of a Euclidean
  hexagon as a bound on $|T|$.   It follows that the overall number of
  vertices is bounded by
  \[
    V \leq \sum_{k\geq 0} (n-6k)^+
    = \left\lfloor \frac{(n+3)^2}{12}\right\rfloor.
  \]
  (The last identity is easily verified by checking cases for $n\mod 6$).

  \medskip

  In the case of a convex hexagon in the triangular lattice we indeed have
  $n' = n-6$, unless the hexagon has width 1, in which case $m=0$.  It
  follows that equality is achieved by convex hexagons in the triangular
  lattice with all boundary segments of suitable lengths in
  $\{k,k+1,k+2\}$.  For example, if $n=6k+7$ the boundary segments have
  lengths $(k,k+2,k+1,k+1,k+1,k+2)$ in order.
\end{proof}

Since any map with no multiple edges can be made into a triangulation by
adding edges, it follows that the same inequality holds for any simple map
in a disc.

\begin{coro}
  Any simple map in a disc with $V$ vertices and $n$ boundary vertices, and with
  all internal degrees at least $6$ has
  $V\leq \left \lfloor \frac{(n+3)^2}{12} \right\rfloor$.
\end{coro}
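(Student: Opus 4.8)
The plan is to reduce the corollary directly to \Cref{thm:isoV} by completing the given simple map to a triangulation on the same vertex set. Let $G$ be a simple map in the disc with $V$ vertices, $n$ boundary vertices, and all internal degrees at least $6$. I would add edges to $G$ one at a time, each new edge being an arc drawn inside an existing bounded face, chosen so that planarity and simplicity (no crossings, no multiple edges) are preserved, and continue until the map is \emph{edge-maximal}: no further edge can be inserted inside any bounded face. Call the resulting map $T$. The whole content of the corollary is that this $T$ is a disc triangulation to which the theorem applies.

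Three properties then let me invoke \Cref{thm:isoV}. First, every added edge lies inside a bounded face, so the outer boundary of the disc is never touched; hence $T$ has the same vertex set as $G$, so $|V(T)|=V$, and the same $n$ boundary vertices. Second, adding edges never decreases a vertex degree, so every internal vertex of $T$ still has degree at least $6$. Third, and this is the only step requiring argument, $T$ is a triangulation, i.e.\ every bounded face is a triangle.

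For the third point, suppose some bounded face $f$ of $T$ had at least four distinct vertices on its boundary walk. If two of them were non-adjacent in $T$, I could join them by an arc inside $f$, contradicting edge-maximality; so all pairs among these four vertices would have to be adjacent. But four mutually adjacent vertices cannot all lie on the boundary of a single face, since in any planar drawing of $K_4$ no face has all four vertices on its boundary (the face $f$ of the larger map $T$ lies inside some face of the $K_4$-subdrawing, which touches only three of them). Hence every bounded face of $T$ carries at most three vertices and is a triangle. Faces bounded by non-simple walks, arising from cut vertices or bridges exactly as in the proof of \Cref{thm:isoV}, are handled identically, since any bounded region carrying four or more distinct vertices on its frontier still admits a missing chord that can be drawn inside it. Thus $T$ is a disc triangulation with $V$ vertices, $n$ boundary vertices, and minimal internal degree at least $6$, and applying \Cref{thm:isoV} to $T$ yields $V \le \lfloor (n+3)^2/12\rfloor$, which is the claimed bound for $G$.

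The main obstacle is precisely this triangulation claim: one must be certain that edge-maximality genuinely forces triangular faces even in the presence of cut vertices, bridges, or faces whose boundary is not a simple cycle, and that the completion never requires adding a vertex or altering the boundary (which would change $V$ or $n$). I expect the $K_4$ planarity obstruction above, which is the standard fact that a maximal simple planar graph is a triangulation, to settle this cleanly, so that the corollary follows with no change to either $V$ or $n$.
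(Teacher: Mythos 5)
Your proposal is correct and matches the paper's approach exactly: the paper proves the corollary in one sentence by completing the simple map to a triangulation via edge additions (which preserve $V$, $n$, and internal degrees) and invoking \cref{thm:isoV}. You simply supply the standard details (the $K_4$ obstruction showing edge-maximality forces triangular faces) that the paper leaves implicit.
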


The edge boundary is also minimized by hexagons in the triangular lattice.

\begin{thm}
  If $T$ is a triangulation where all vertices have degree at least $6$,
  and $A$ is any finite set in $T$ of size $V$, then the edge boundary $\bd
  A$ satisfies $|\bd A| \geq \sqrt{48V}$.
\end{thm}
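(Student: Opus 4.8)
The plan is to derive this edge isoperimetric inequality from the vertex isoperimetric bound of \cref{thm:isoV}. Write $\deg v$ for the degree of $v$ in $T$, so that $\deg v \ge 6$ for every vertex. I will compare the edge boundary $|\bd A|$ with the length $n$ of the boundary of the sub\-triangulation spanned by $A$, prove the clean estimate $|\bd A| \ge 2n+6$, and then observe that \cref{thm:isoV} forces $2n+6 \ge \sqrt{48V}$.

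Before the main computation I would reduce to the case in which the subcomplex $T[A]$ consisting of all faces of $T$ with three vertices in $A$ is an honest disc triangulation. If $A$ is disconnected, the edge boundary is additive over connected components while $V$ is too, and since $\sum_i \sqrt{V_i} \ge \sqrt{\sum_i V_i}$ the general inequality follows from the connected case. If $A$ is connected but its complement has bounded components, I would fill them in; this can only increase $V$ and can only delete boundary edges, so the filled set satisfies $|\bd A| \ge |\bd A_{\mathrm{fill}}| \ge \sqrt{48 V_{\mathrm{fill}}} \ge \sqrt{48 V}$, and it suffices to treat the simply connected case. After these reductions the internal vertices of $T[A]$ are vertices of $T$ all of whose neighbours lie in $A$, hence have degree $\ge 6$, so \cref{thm:isoV} applies to $T[A]$.

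The heart of the argument is a single edge count. For a disc triangulation $T[A]$ with $V$ vertices and boundary length $n$, the same Euler computation used in the proof of \cref{thm:isoV} gives that the number of edges with both endpoints in $A$ equals $3V-n-3$. Since $\sum_{v\in A}\deg v = 2(3V-n-3) + |\bd A|$ and $\sum_{v \in A}\deg v \ge 6V$, this yields
\[
  |\bd A| \ge 6V - 2(3V-n-3) = 2n+6 .
\]
On the other hand \cref{thm:isoV} gives $V \le (n+3)^2/12$, equivalently $\sqrt{48V} \le 2(n+3) = 2n+6$. Combining the two displays gives $|\bd A| \ge 2n+6 \ge \sqrt{48V}$, as required.

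I expect the only real difficulty to be justifying that $T[A]$ may be taken to be a genuine disc triangulation: after filling, $A$ may still span a complex with cut vertices on its boundary, with bridges, or with edges and vertices lying in no full face, exactly the degeneracies already met in the proof of \cref{thm:isoV}. I would handle them with the same bookkeeping, verifying that the edge identity survives (with bridges counted twice in $n$) and that none of the reduction steps decreases $|\bd A|^2/V$, so that the two clean inequalities above continue to close the argument.
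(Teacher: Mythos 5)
Your proposal is correct and follows essentially the same route as the paper: both arguments establish the key estimate $|\bd A| \ge 2n+6$ by a degree count (you sum $\deg v \ge 6$ over all of $A$ against the Euler count $E = 3V-n-3$, while the paper sums only over boundary vertices and reuses its bound $m \le 2n-6$ --- algebraically the same computation), and then invoke \cref{thm:isoV} to get $2n+6 = 2(n+3) \ge \sqrt{48V}$. Your explicit reductions (splitting components, filling holes, deferring the degenerate cases where the spanned complex is not a genuine disc triangulation) are details the paper glosses over entirely, so the proposal is, if anything, more careful than the published argument.
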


\begin{proof}
  Let $n$ be the number of boundary vertices in $A$.  From \cref{thm:isoV}
  we have that $12V \leq (n+3)^2$.  The number of directed edges from these
  $n$ vertices into the triangulation is at most $2n-6$ (as in the proof
  above). Since there are at most $2n$ directed edges in the boundary (less
  if it contains any bridges) we have that the number of outward edges is
  at least $2n+6 \geq \sqrt{48 V}$.
\end{proof}

\section{Hyperbolic lattices}

In \cite{HJL} it is proved that in the hyperbolic lattice balls solve the
Isoperimetric problem.  This involves using $m = 2n-6-(\sigma-6)(V-n)$ with
a better bound on the last term.  Let $V_R$ denote the number of vertices
in a ball of radius $R$ in the $\delta$-regular triangulation, and let
$S_R$ denote the size of its boundary.  (These grow exponentially; Explicit
formulae are available but not helpful for us.)

We have the following result, in a sense stating that hyperbolic balls are
optimal among all triangulations with minimal degree $\delta$.

\begin{thm}
  If $T$ is a disc triangulation with minimal degree $\delta$ and $V \geq
  V_R$ vertices.  Then $n\geq S_R$.
\end{thm}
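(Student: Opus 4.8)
The plan is to rerun the stripping argument from the proof of \cref{thm:isoV} with the stronger input coming from the minimal degree $\delta$. The identity $m = 2n-6-(\sigma-6)(V-n)$, the face-counting identities for the stripped faces, and the inequality $\beta_2\le\beta_1+2\gamma+3\beta_0$ all hold verbatim, since none of them used $\delta=6$. What changes is only the bound on $m$: as the average internal degree now satisfies $\sigma\ge\delta$ and $V-n$ counts the internal vertices, one gets $m\le 2n-6-(\delta-6)(V-n)$. Substituting this into the same computation produces the improved stripping relation
\[
  n' \le n-6-(\delta-6)(V-n),
\]
where $V-n$ is precisely the volume of the stripped triangulation.

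The next step is to pin down the extremal recursion. Stripping the ball $B_R$ in the $\delta$-regular triangulation returns $B_{R-1}$, and I would verify by a direct count in the regular lattice that this case attains equality above, so that the sphere and volume sizes obey
\[
  S_{R} = S_{R-1} + 6 + (\delta-6)V_{R-1}, \qquad V_R = V_{R-1} + S_R,
\]
with $V_0 = 1$ and $S_0 = 0$ (which forces $S_1=\delta$). Eliminating the volume gives the linear recursion $S_{R+1} = (\delta-4)S_R - S_{R-1}$, whose characteristic root exceeds $1$ precisely when $\delta\ge 7$; these two formulas are all that is needed about the lattice, and the explicit closed forms are irrelevant.

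With the recursion in hand I would argue by induction on $R$ that $V\ge V_R$ forces $n\ge S_R$. Suppose toward a contradiction that $n<S_R$. Then the stripped triangulation has volume $V'=V-n>V_R-S_R=V_{R-1}$, so the inductive hypothesis applied to it yields $n'\ge S_{R-1}$. On the other hand, the improved stripping relation together with $V'>V_{R-1}$, $n<S_R$, and $\delta-6>0$ gives
\[
  n' \le n-6-(\delta-6)V' < S_R-6-(\delta-6)V_{R-1} = S_{R-1},
\]
contradicting $n'\ge S_{R-1}$. Hence $n\ge S_R$, closing the induction.

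As in \cref{thm:isoV}, the genuine difficulty is that the stripped object is in general not a single disc triangulation: it can be disconnected, have non-simple boundary, cut vertices, bridges, or isolated internal vertices. To run the induction I would state it for finite disjoint unions, with volume and boundary added and bridges counted twice, using that each stripped component contributes its own $-6$, so that splitting never helps the competitor and the single ball stays optimal. The one point that needs care is that this union statement fails for completely degenerate inputs — a pile of isolated vertices has large volume and no boundary — so I must check that such pieces cannot actually be manufactured by stripping a disc whose boundary is smaller than $S_R$. This should follow from the same degree count that bounds $m$: isolated vertices arise only from apex configurations, whose number is forced to be a small fraction of $n$ and hence far below $V_{R-1}$. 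Carrying out this bookkeeping cleanly, rather than establishing the main inequality, is where I expect the real work to lie.
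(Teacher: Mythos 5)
Your proposal is correct and is essentially the paper's own proof: the same strengthened stripping inequality $n' \le n-6-(\delta-6)(V-n)$, the same observation that balls in the $\delta$-regular triangulation attain equality (equivalently your recursions $V_R=V_{R-1}+S_R$ and $S_R=S_{R-1}+6+(\delta-6)V_{R-1}$), and the same induction on $R$, which the paper phrases as taking a counterexample with minimal $R$ and showing that stripping produces a smaller one. The degeneracy issue you isolate in your final paragraph (disconnected stripped pieces, isolated internal vertices carrying volume but no edge-boundary) is genuine, but the paper does not address it at all—it simply asserts that stripping yields a smaller counterexample—so you are more careful than the source there; just note that your claimed resolution is not immediate, since for large $\delta$ the boundary $n$ can be of order $(\delta-6)V_{R-1}$, and then a ``small fraction of $n$'' need not be far below $V_{R-1}$.
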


\begin{proof}
  Repeating the argument above when all vertex degrees are at least some
  $\delta>6$, we find that after stripping a layer, the new boundary $n'$
  satisfies
  \[
    n' \leq n-6 - (\delta-6)(V-n).
  \]
  In the case of a ball in the $\delta$-uniform triangulation, there are no
  faces of type $\beta_i$, and there is equality here.

  Suppose for a contradiction that there exists a triangulation with
  $V\geq V_R$ and $n<S_R$, and consider the example with minimal possible
  $R$f.  Then $V-n > V_R - S_R = V_{R-1}$, and
  $n' < S_R-6-(\delta-6)(V_R-S_R) = S_{R-1}$.  Thus stripping a layer gives
  a smaller counter-example.
\end{proof}

\begin{problem}
  Is it true that for any triangulation with minimal degree $\delta$, there
  exists a subset of the $\delta$-regular triangulation with the same
  volume and equal or smaller boundary?
\end{problem}

Finally, we remark that hyperbolicity follows once there are enough
vertices of degree greater than 6.

\begin{prop}
  For every $R>0$ there exists $\alpha>0$ such that the following holds.
  Let $T$ be an infinite plane triangulation with minimal degree $6$, and
  such that for some $R$, every ball of radius $R$ contains a vertex of
  degree at least $7$.  Then $T$ is non-amenable, with isoperimetric
  constant at least $\alpha$.
\end{prop}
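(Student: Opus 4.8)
The plan is to run a discrete Gauss--Bonnet argument, matching an upper bound on the total degree excess that comes from the boundary against a lower bound that comes from the density of defects. Write $x_v := (d_v-6)^+$ for the excess at a vertex, and for a finite set $A$ let $X(A)=\sum_{v\in\operatorname{int}(A)}x_v$ be its total interior excess. Exactly as in the proof of \cref{thm:isoV}, I would first reduce to the case where $A$ is the vertex set of a disc triangulation with $n=|\bd A|$ boundary vertices (splitting the induced region into simply connected pieces, as in the stripping step). The identity $m=2n-6-(\sigma-6)(V-n)$, together with $(\sigma-6)(V-n)=\sum_{\operatorname{int}}(d_v-6)$ and $m\ge 0$, rearranges to
\[
  X(A)\le 2n-6 .
\]
This is the discrete analogue of the fact that on a non-positively curved disc the total curvature is controlled by the boundary, and it is the inequality that must be matched from below.

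For the lower bound I would show that the hypothesis forces $X(A)\ge \delta_0\,|A|$ for some $\delta_0=\delta_0(R)>0$: every ball of radius $R$ carries at least one unit of excess, so excess is distributed through $A$ at a positive density. The clean statement to aim for is a discrete volume comparison,
\[
  |B(w,r)|\le P(r)\bigl(1+X(B(w,r))\bigr),
\]
for an explicit function $P$, which I expect to follow from the stripping identity applied layer by layer: since a boundary vertex of $B(w,k)$ has at least one inward and two in-layer neighbours, one gets $|\bd B(w,k+1)|\le 3|\bd B(w,k)|+a_k$ with $a_k$ the excess on the $k$-th layer, and summing bounds $|B(w,r)|$ linearly in the enclosed excess. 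Given this comparison, a Vitali-type covering of the deep part of $A$ by radius-$R$ balls — each carrying excess at least $1$, the selected subfamily being disjoint — lets one charge the volume of $A$ against its interior excess and conclude $X(A)\ge\delta_0|A|$. For sets with $|A|$ below a threshold depending on $R$ this step is unnecessary, since the flat bound $n\ge\sqrt{12\,|A|}$ of \cref{thm:isoV} already gives $n/|A|\ge\alpha$; only large sets, for which the deep part dominates the volume, require the comparison.

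Combining the two bounds gives $\delta_0|A|\le X(A)\le 2n$, hence $n\ge\tfrac{\delta_0}{2}|A|$, so $T$ is non-amenable with vertex-isoperimetric constant $\alpha=\delta_0/2>0$; the edge-boundary version follows exactly as in the proof of Theorem~2, using that there are at most $2n-6$ directed internal edges out of the boundary. I would also record the clean intermediate inequality obtained by iterating the stripping bound $n'\le n-6-X_{\mathrm{int}}$ and telescoping, namely $n\ge\sum_{v\in A}x_v\,\operatorname{depth}_A(v)$, where $\operatorname{depth}_A(v)$ is the distance from $v$ to $\bd A$; this already encodes the heuristic that spread-out curvature forces a large boundary.

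The hard part will be the lower bound $X(A)\ge\delta_0|A|$ in the presence of \emph{unbounded} degrees. A single vertex of enormous degree fills a huge ball while contributing only one defect, so the naive count of degree-$\ge 7$ vertices is far too small to dominate the volume; the remedy is that such a vertex also carries enormous excess, so one must charge volume to \emph{local} excess via the volume-comparison lemma rather than to defect counts. The genuine technical obstacle is then to control the overlap multiplicity of the enlarged covering balls, where the same high-degree vertices can make a radius-$R$ ball tree-like and allow many well-separated centres nearby; keeping the resulting cross-terms linear (rather than quadratic) in the excess is the crux of the argument. The secondary point is the reduction to simply connected discs: as in the main proof the relevant region may be disconnected or have non-simple boundary, and one handles this by the same decomposition into disc triangulations before applying $X(A)\le 2n-6$.
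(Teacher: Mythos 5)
Your scheme has the right shape, and its first half is exactly the paper's: the bound $X(A)\le 2n-6$ is precisely the identity $m=2n-6-(\sigma-6)(V-n)$ combined with $m\ge 0$, which is also how the paper begins. But the proof is incomplete at exactly the point where the hypothesis (a defect in every $R$-ball) has to be converted into an isoperimetric statement. You defer this — ``keeping the resulting cross-terms linear (rather than quadratic) in the excess is the crux of the argument'' — and that crux is a genuine gap, not a technicality that routine work would fill. With unbounded degrees the overlap multiplicity of your enlarged Vitali balls really is unbounded: if $u$ has enormous degree $D$, then vertices at distance $R+1$ from $u$ in well-separated directions around $u$ are pairwise at distance exactly $2R+2$ (every short connection passes through $u$), so one can choose arbitrarily many centres $w_i$ with the balls $B(w_i,R)$ disjoint while \emph{every} enlarged ball $B(w_i,3R)$ contains $u$. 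The cross-term $\sum_i X\bigl(B(w_i,3R)\bigr)$ then picks up a contribution $N\cdot(D-6)$, and the charging inequality degenerates to $|A|\lesssim n^2$, which is no stronger than \cref{thm:isoV} and gives no non-amenability. (A smaller issue: the intermediate claim $X(A)\ge\delta_0|A|$ is false as stated — a long path has $X(A)=0$ and $|A|$ arbitrary, and your small-versus-large dichotomy does not exclude it, since a path is large but has no deep part; the statement must be phrased as $|A|\le C(R)\,(n+X(A))$.)

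The missing idea, which is how the paper makes the unbounded-degree problem disappear before any counting starts, is a preprocessing surgery: replace each vertex of degree greater than $9$ by several vertices of degree at least $7$ (and at most $9$), keeping all other degrees at least $6$. This preserves the minimal-degree hypothesis, and since the new vertices are themselves defects, it maintains or reduces the distance from any point to a vertex of degree at least $7$. Once degrees are bounded by $9$, every ball of radius $R$ has at most $9^R$ vertices and no covering lemma or volume-comparison is needed; one argues by a dichotomy on the average internal degree $\sigma$ of a finite connected set $S$. If $\sigma\ge 6+\eps$, then $0\le m = 2n-6-(\sigma-6)(V-n)$ forces $\eps(V-n)\le 2n-6$, so the boundary is proportional to the volume. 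If $\sigma<6+\eps$, then there are at most $\eps V$ internal vertices of degree at least $7$, and these lie within distance $R$ of at most $9^R\eps V$ vertices; hence at least $(1-9^R\eps)V$ vertices of $S$ have no internal defect within distance $R$ and so, by hypothesis, lie within distance $R$ of $\partial S$, giving $n\ge 9^{-R}(1-9^R\eps)V$. Choosing $\eps$ with $9^R\eps<1$ finishes. In short: you tried to handle unbounded degrees head-on by charging volume to excess, and correctly identified the obstruction, but did not overcome it; the paper sidesteps that very obstruction by truncating degrees at the outset.
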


\begin{proof}
  First, we may assume that $T$ has bounded degrees.  Indeed, we may replace
  each vertex of degree greater than $9$ by several vertices of degree at
  least $7$, while keeping all other degrees at least $6$.  This maintains
  or reduces the distance to a high degree vertex from any point.
  
  Consider a finite set $S\subset T$ of size $V$.  To optimize the
  isoperimetric ratio, we may assume $S$ is connected.  Let $\sigma$ be the
  average degree of internal vertices in $S$.  Then we have as above
  $m = 2n - 6 - (\sigma - 6)(V - n)$.

  For some $\eps>0$ to be fixed later, we argue as follows.  If
  $\sigma\geq 6+\eps$ this implies the boundary is proportional to $V$.  If
  $\sigma<6-\eps$, then there are at most $\eps V$ vertices of degree
  greater than $6$.  Since degrees are bounded by $9$, each of these is
  within distance $R$ of at most $9^R\eps$ vertices, and so $(1-9^R\eps)V$
  vertices are within distance $R$ of $|\partial S|$.  This implies
  $|\partial S| \geq c V$ for some $c$, provided $9^R \eps < 1$.
\end{proof}

It should be possible to relax the condition of minimal degree $6$ to the
following.  Suppose there is a partition of the vertices into sets $A_i$
such that each $A_i$ has diameter at most $R$ and such that within each
$A_i$ the average degree is greater than $6$.  

We remark that the proof above gives $\alpha > e^{-c/\eps}$.  The proof can
be adapted to have $\alpha$ polynomial in $R$.

\begin{problem}
  For any given $R$, what is the optimal isoperimetric constant for a
  triangulation as above?
\end{problem}

\section{Further questions}

Kleiner (1992) \cite{K} proved an analogue of Weil's Theorem for three
dimensional manifolds of non-positive curvature, and Croke (1984) \cite{C}
proved a four dimensional analogue.  The question in all higher dimensions
is still open.


It would be interesting to have a three (or higher) dimensional version of
our theorems.  A specific problem in three (or more) dimensions is as follows.

\begin{problem}
  Show that for every CAT(0) cubulation of a ball with $V$ cubes and $A$
  boundary squares there is a subset of cubes in the cubic lattice of
  $\R^3$ of size $V$ and at most $A$ boundary squares?
\end{problem}

Here, a cubulation of a ball is a collection of topological cubes, glued
along faces to get a topological ball.  There are various possible
analogues to the having minimal degree $6$.  Being CAT(0) is a well studied
property of general metric spaces, analogous to having non-positive
curvature.  The Cartan-Hadamard Theorem (see e.g.\
\cite[Theorem~II.4.1]{BH}) implies that a plane triangulation is CAT(0) if
and only if it has minimal degree at least $6$.

It is similarly possible to characterize a CAT(0) cubulation in terms of
the possible local structures at each vertex, and in particular it is
necessary (though not sufficient) that each vertex to be in at least $8$
cubes.  We refer the reader to \cite{BH} for the theory of CAT(0) metric
spaces, and in particular Theorems~II.5.2 and II.5.18 for the
characterization in terms of local structure.

\bigskip

Finally, it is also possible for the regular triangulations to be extremal
in other ways.

\begin{problem}
  If $T$ is a plane triangulation with all vertex degrees at least $6$, is
  it necessarily true that the connective constant satisfies
  $\mu(T) \geq \mu(\mathbb{T}_6)$? Is it true that
  $p_c^{\mathrm{site}}(T) \leq 1/2$, and
  $p_c^{\mathrm{bond}}(T) \leq 2\sin(\pi/18)$?
\end{problem}

$1/2$ and $2\sin(\pi/18)$ are the values for the 6-regular triangular
lattice $\mathbb{T}_6$ \cite{SE}.  Similarly, for other $k$, do the
$k$-regular triangulation have the minimal connective constant and maximal
percolation threshold.


\bigskip

\begin{enumerate}[font=\sc, leftmargin=40mm, labelsep=5mm]

\item[Omer Angel]
  University of British Columbia \\
  \verb+angel@math.ubc.ca+

\item[Itai Benjamini]
  Weizmann Institute \\
  \verb+itai.benjamini@gmail.com+

\item[Nizan Horesh]
  Intel \\
  \verb+nizanh@gmail.com+

\end{enumerate}

\end{document}